\let\pa\partial  
\let\na\nabla  
\let\eps\varepsilon
\newcommand{\T}{{\mathcal T}}
\newcommand{\Pt}{{\mathcal P}}
\newcommand{\F}{{\mathcal F}}
\newcommand{\dist}{{\mathrm d}}
\newcommand{\m}{{\mathrm m}}
\newcommand{\N}{{\mathbb N}}  
\newcommand{\R}{{\mathbb R}} 
\newcommand{\diver}{\operatorname{div}}
\begin{document}

\title*{A finite-volume scheme for a cross-diffusion model arising from
interacting many-particle population systems}
\titlerunning{A finite-volume scheme for a cross-diffusion system}
\author{Ansgar J\"ungel and Antoine Zurek}

\institute{
\at
Institute for Analysis and Scientific Computing, Vienna University of
	Technology, Wiedner Hauptstra\ss e 8--10, 1040 Wien, Austria \\
\email{juengel@tuwien.ac.at, antoine.zurek@tuwien.ac.at}}

\maketitle

\abstract{A finite-volume scheme for a cross-diffusion model arising from
the mean-field limit of an interacting particle system for multiple population
species is studied. 
The existence of discrete solutions and a discrete entropy production inequality
is proved. The proof is based on a weighted quadratic entropy that is not the 
sum of the entropies of the population species.
\newline\indent
\keywords{Finite volume scheme, cross-diffusion system, 
entropy method.
\\[5pt]
{\bf MSC }(2010){\bf:} 
35K51, 35K55, 35Q92, 65M08
}
}


\section{Introduction}


\subsection{Presentation of the model}

We consider the following cross-diffusion system:
\begin{align}\label{1.eq}
  \pa_t u_i + \diver\big(-\delta\na u_i - u_i\na p_i(u)\big) = 0, \quad
	p_i(u) = \sum_{j=1}^n a_{ij}u_j \quad\mbox{in }\Omega,\ t>0, 
\end{align}
where $i=1,\ldots,n$ with $n\ge 2$, $\Omega\subset\R^2$ is an open bounded polygonal 
domain, and $\delta > 0$, $a_{ij} > 0$. We impose 
the initial and no-flux boundary conditions
\begin{equation}\label{1.bic}
  u_i(0)=u_i^0\ge 0\quad\mbox{in }\Omega,\quad 
	\na u_i\cdot\nu=0\quad\mbox{on }\pa\Omega,\ t>0,\ i=1,\ldots,n,
\end{equation}
where $\nu$ is the exterior unit normal vector on $\pa\Omega$. We write 
$u:=(u_1,\ldots,u_n)$ and $u^0:=(u^0_1,\ldots,u^0_n)$. 
Equations \eqref{1.eq} are derived from a weakly interacting stochastic many-particle
system in the mean-field limit \cite{CDJ19}. It can be seen as a simplification
of the Shigesada-Kawasaki-Teramoto (SKT) population model \cite{SKT79}, where the
diffusion is reduced to $\delta\na u_i$. The two-species system was analyzed
first in \cite{BGHP85}, but up to now, no analytical or numerical results are
available for the $n$-species system.
The diffusion matrix associated to \eqref{1.eq} is neither symmetric nor positive
definite but we show below that system \eqref{1.eq} possesses an entropy
structure \cite{Ju15} yielding gradient estimates that are the basis for the
numerical analysis.

We assume that $(a_{ij})\in\R^{n\times n}$
is positively stable (i.e., all eigenvalues of $A=(a_{ij})$ have positive real parts)
and that the detailed-balance condition holds, i.e., 
there exist numbers $\pi_1,\ldots,\pi_n>0$ such that 
\begin{equation}\label{1.db}
  \pi_i a_{ij} = \pi_j a_{ji} \quad\mbox{for all }i,j=1,\ldots,n.
\end{equation}
Note that for the two-species model this condition is always satisfied, 
just set $\pi_1=a_{21}$ and $\pi_2=a_{12}$. Since $A_1=\operatorname{diag}(\pi_i^{-1})$
is symmetric, positive definite and $A_2=(\pi_i a_{ij})$ is symmetric, by
\cite[Prop.~6.1]{Ser10}, the number of positive eigenvalues of $A=A_1A_2$ equals
that for $A_2$. Thus, $A_2$ has only positive eigenvalues, which together with
the symmetry means that $A_2$ is symmetric, positive definite. 

Our (numerical) analysis is based on the observation that system \eqref{1.eq}
possesses an entropy structure with a weighted quadratic entropy that has not been
observed before in cross-diffusion systems:
$$
  H[u]=\int_\Omega h(u)dx, \quad\mbox{where }
	h(u) := \frac{1}{2\delta}\sum_{i,j=1}^n \pi_i a_{ij} u_i u_j.
$$
Interestingly, this entropy is not of the form $\sum_{i=1}^n h_i(u_i)$, but it
mixes the species. A formal computation shows that 
$$
  \frac{dH}{dt} + \sum_{i,j=1}^n \pi_i a_{ij}\int_\Omega\na u_i\cdot\na u_j dx
	+ \frac{1}{\delta}\sum_{i=1}^n\pi_i\int_\Omega u_i|\na p_i(u)|^2 dx = 0.
$$
With $\lambda>0$ being the smallest eigenvalue of $(\pi_ia_{ij})$, we conclude
the following entropy production inequality:
$$
  \frac{dH}{dt} + \lambda\sum_{i=1}^n\int_\Omega|\na u_i|^2 dx
	+ \frac{1}{\delta}\sum_{i=1}^n\pi_i\int_\Omega u_i|\na p_i(u)|^2 dx \le 0.
$$
Our aim is to prove this inequality for the finite-volume solutions.


\subsection{The numerical scheme}

A mesh of $\Omega$ is given by a set $\T$ of open
polygonal control volumes, a set $\mathcal{E}$ of edges, and a set $\Pt$ of points
$(x_K)_{K\in\T}$. We assume that the mesh is admissible in the sense of 
Definition 9.1 in \cite{EGH00}.
We distinguish in $\mathcal{E}$ the interior edges 
$\sigma=K|L$ and the exterior edges such that
$\mathcal{E}=\mathcal{E}_{\mathrm{int}} \cup \mathcal{E}_{\mathrm{ext}}$. 
For a given control volume $K\in\T$, we denote by $\mathcal{E}_K$ the
set of its edges. This set splits into $\mathcal{E}_K=\mathcal{E}_{{\rm int},K}
\cup \mathcal{E}_{{\rm ext},K}$. 
For any $\sigma\in\mathcal{E}$, there exists at least one cell
$K\in\T$ such that $\sigma\in\mathcal{E}_K$ and we denote this cell by $K_\sigma$. 
When $\sigma$ is an interior edge, $\sigma=K|L$, $K_\sigma$ can be either $K$ or $L$. 
For all $\sigma\in\mathcal{E}$, we define $\dist_\sigma = \dist(x_K,x_L)$ if 
$\sigma = K|L \in \mathcal{E}_{{\rm int}}$ and $\dist_\sigma = \dist(x_K,\sigma)$ 
if $\sigma \in \mathcal{E}_{{\rm ext},K}$. Then the transmissibility coefficient 
is defined by $\tau_\sigma = \m(\sigma)/\dist_\sigma$ for all $\sigma \in \mathcal{E}$. 
We assume that the mesh satisfies the following regularity constraint: 
\begin{equation}\label{2.regmesh}
  \exists \xi > 0,\, \forall K\in\T,\, \forall\sigma\in\mathcal{E}_K: \
	\dist(x_K,\sigma) \ge \xi\dist_\sigma.
\end{equation}
The size of the mesh is denoted by $\Delta x=\max_{K\in\T}\operatorname{diam}(K)$.
Let $N_T\in\N$ be the number of time steps, $\Delta t=T/N_T$ be the
time step size, and $t_k=k\Delta t$ for $k=0,\ldots,N_T$.

Let $\mathcal{H}_\T$ be the linear space of functions $\Omega \to \R$ which are constant on each $K \in \T$. For $v \in \mathcal{H}_\T$, we introduce
\begin{equation*}
  D_{K,\sigma}v = v_{K,\sigma}-v_K, \quad D_\sigma v = |D_{K,\sigma} v| 
	\quad \mbox{for all }K \in \T, \,\sigma \in \mathcal{E}_K,
\end{equation*}
where $v_{K,\sigma}$ is either $v_L$ ($\sigma=K|L$) or $v_K$ ($\sigma \in 
\mathcal{E}_{{\rm ext},K}$). Finally, we define the (squared) discrete $H^1$ norm
$$
	\|v\|^2_{1,2,\T} = \sum_{\sigma \in \mathcal{E}} \tau_\sigma (D_\sigma v)^2 
	+ \sum_{K \in \T} \m(K)v_K^2.
$$
For all $K \in \T$ and $i=1,\ldots,n$, $u^0_{i,K}$ denotes the mean value of $u^0_i$ 
over $K$. The finite-volume scheme for \eqref{1.eq} reads as
\begin{align}\label{sch1}
  & \frac{\m(K)}{\Delta t}(u_{i,K}^{k}-u_{i,K}^{k-1})
	+ \sum_{\sigma\in\mathcal{E}_K}\F_{i,K,\sigma}^{k} = 0, \quad i=1,\ldots,n, \\
	& \F_{i,K,\sigma}^{k} = -\tau_\sigma \big(\delta D_{K,\sigma} u^k_i 
	+ u^k_{i,\sigma} D_{K,\sigma}  p_i (u^k)\big) \quad\mbox{for all }K \in \T, \, 
	\sigma \in \mathcal{E}_K, \label{sch2}
\end{align}
with $u^k =(u^k_1, \ldots, u^k_{n})$ and 
$u_{i,\sigma}^{k} := \min\{u^k_{i,K},u^k_{i,K,\sigma}\}$.
As in \cite{ABB11}, this definition of $u_{i,\sigma}^{k}$ allows us to prove 
the nonnegativity of $u_{i,K}^k$.


\subsection{Main result}

The main result of this work is the existence of nonnegative solutions to scheme 
\eqref{sch1}-\eqref{sch2}, which preserve the entropy production inequality.

\begin{theorem}[Existence of discrete solutions]\label{thm.ex}
Assume that $u^0 \in L^2(\Omega)^n$ with $u^0_i \geq 0$, $\delta>0$, $a_{ij} > 0$, 
$(a_{ij})$ is positively stable, and \eqref{1.db} holds. Then there exists a solution 
$(u_K^k)_{K\in \T, \,k=0,\ldots,N_T}$ with $u_K^k=(u_{1,K}^k,\ldots,u_{n,K}^k)$ 
to scheme \eqref{sch1}-\eqref{sch2} satisfying $u_{i,K}^k\ge 0$ for all	$K\in\T$, 
$i=1,\ldots,n$, and $k=0,\ldots,N_T$. Moreover, the following discrete 
entropy production inequality holds:
\begin{align}
  \sum_{K\in \T} \m(K) h(u^{k}_K) 
	&+ \Delta t \lambda \sum_{i=1}^n 
	\sum_{\sigma \in \mathcal{E}} \tau_\sigma (D_{\sigma} u^k_i)^2 \nonumber \\ 
	&{}+ \frac{\Delta t}{\delta}\sum_{i=1}^n \sum_{\sigma \in \mathcal{E}} \tau_\sigma 
	\pi_i u^k_{i,\sigma} (D_{\sigma} p_i(u^k))^2  
	\le \sum_{K \in \T} \m(K)  h(u^{k-1}_K), \label{4.edi1} 
\end{align}
where $\lambda$ denotes the smallest eigenvalue of $(\pi_i a_{ij})$.
\end{theorem}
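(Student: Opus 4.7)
The plan is the standard two-step template for nonlinear implicit finite-volume schemes: first establish the discrete entropy dissipation by testing with the entropy variable, then obtain existence of a solution by a topological fixed-point argument using the a priori bound that the entropy structure provides.

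For the entropy inequality, the decisive algebraic fact is that the detailed-balance condition \eqref{1.db} gives $\pa h/\pa u_i = \pi_i p_i(u)/\delta$, because $\sum_j \pi_j a_{ji}u_j = \sum_j \pi_i a_{ij}u_j = \pi_i p_i(u)$. I would therefore multiply \eqref{sch1} by $w_{i,K}^{k}:=\pi_i p_i(u^k)_K/\delta$ and sum over $K\in\T$ and $i=1,\ldots,n$. The time-difference term is handled by the convexity of the quadratic entropy: since $(\pi_i a_{ij})$ is symmetric positive definite, $h$ has a nonnegative Hessian, hence $(u_K^k-u_K^{k-1})\cdot\na h(u_K^k)\ge h(u_K^k)-h(u_K^{k-1})$. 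For the flux term I would use the conservativity $\F_{i,K,\sigma}^k+\F_{i,L,\sigma}^k=0$ (which holds because $u_{i,\sigma}^k$ is symmetric in $K,L$) to perform the usual discrete integration by parts, producing
\begin{equation*}
\Delta t\sum_{\sigma\in\mathcal{E}_{\mathrm{int}}}\tau_\sigma\sum_{i,j}\pi_i a_{ij}\,D_{K,\sigma}u_i^k\,D_{K,\sigma}u_j^k \;+\; \frac{\Delta t}{\delta}\sum_{\sigma\in\mathcal{E}_{\mathrm{int}}}\tau_\sigma\sum_{i=1}^n\pi_i u_{i,\sigma}^k\bigl(D_{K,\sigma}p_i(u^k)\bigr)^2.
\end{equation*}
The second summand already appears in \eqref{4.edi1}. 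The first is bounded below by $\lambda\Delta t\sum_\sigma\tau_\sigma\sum_i(D_\sigma u_i^k)^2$ via the spectral bound on the symmetric positive-definite matrix $(\pi_i a_{ij})$, which closes the estimate.

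For existence and nonnegativity, at each time level I would apply Brouwer's fixed-point theorem to a continuous map $F\colon\R^{n|\T|}\to\R^{n|\T|}$ whose zeros are solutions of (possibly a truncated version of) the scheme. The entropy inequality, applied to any such candidate, gives $\sum_K\m(K)|u_K^k|^2\le C\sum_K\m(K)|u_K^{k-1}|^2$ through $h(u)\ge(\lambda/(2\delta))|u|^2$, and this furnishes the closed ball on which Brouwer's theorem applies. For nonnegativity I would exploit the upwind choice $u_{i,\sigma}^k=\min\{u_{i,K}^k,u_{i,K,\sigma}^k\}$ as in \cite{ABB11}: testing the scheme against $-(u_{i,K}^k)^-$, the drift flux contributes nonnegatively on cells where $u_{i,K}^k<0$ since $u_{i,\sigma}^k$ is cut off to $0$ there, and the remaining terms together with $u^{k-1}\ge 0$ force $(u_{i,K}^k)^-=0$.

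The main obstacle I anticipate is reconciling nonnegativity with the entropy method: the weighted quadratic entropy $h$, unlike the logarithmic entropies usually available for SKT-type systems, does not by itself force $u_i\ge 0$, so some care in the fixed-point argument (most likely a truncation $u_i\mapsto \max\{0,u_i\}$ inside $p_i(u)$ and $u_{i,\sigma}^k$, followed by a passage to the limit once nonnegativity has been proved) is needed so that both properties hold for the final solution. A second, more algebraic, delicate point is that the identification $\pa h/\pa u_i=\pi_i p_i(u)/\delta$—and therefore the clean closure of the three terms in \eqref{4.edi1}—relies critically on \eqref{1.db}; without detailed balance, the cross terms generated by $-u_i\na p_i(u)$ would not match those produced by the entropy, and the inequality would fail.
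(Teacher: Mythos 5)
Your proposal is correct in substance and, for the two core ingredients, coincides with the paper's proof: the entropy inequality is obtained exactly as in Lemma~\ref{lem.edi1} (test with $w_{i,K}=\pi_i p_i(u_K)/\delta$, which equals $\pa h/\pa u_K$ only thanks to \eqref{1.db}, use convexity of $h$ for the time increment, conservativity for the discrete integration by parts, and the smallest eigenvalue of $(\pi_ia_{ij})$ for the gradient term), and the nonnegativity follows from the same truncation $\bar u_{i,\sigma}=\min\{u_{i,K}^+,u_{i,K,\sigma}^+\}$ you anticipate, tested against $u_{i,K}^-$ (the paper localizes at the cell where $u_{i,K}$ attains its minimum; your global test with $-(u_{i,K})^-$ also works, since $\bar u_{i,\sigma}D_{K,\sigma}(u_i^-)=0$ edge by edge and the linear diffusion term has the right sign by the usual monotonicity of $z\mapsto z^-$). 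The one place where you diverge, and where your sketch is thinnest, is the fixed-point step: Brouwer's theorem produces fixed points, not zeros, so "a continuous map whose zeros are solutions" needs to be replaced either by the standard corollary (a continuous vector field satisfying $P(u)\cdot w(u)\ge 0$ on a large sphere has a zero, after the linear change of variables $w=Mu$ with $M=(\pi_ia_{ij}/\delta)$ symmetric positive definite), or by the paper's device: an $\eps$-regularized \emph{linear} problem \eqref{3.lin} in the entropy variables, whose unique solvability defines a genuinely continuous map $F_\eps$, combined with the homotopy invariance of the Brouwer degree and the entropy estimate to exclude boundary fixed points, followed by the limit $\eps\to 0$. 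Your route is viable and somewhat more elementary, but the $\eps$-regularization in the paper is not cosmetic — it is what makes the fixed-point map well defined and supplies the bound on $\|w_i^\eps\|_{1,2,\T}$ that fixes the radius $R$ — so if you go the direct Brouwer route you must spell out the acute-angle corollary and verify that the entropy production (with the truncated $\bar u_{i,\sigma}\ge 0$, so that the drift term keeps its sign for sign-changing candidates) indeed yields $P(u)\cdot w(u)\ge 0$ for $|u|$ large.
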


We expect that the detailed-balance condition \eqref{1.db} can be replaced by a
weak cross-diffusion condition as in \cite{CDJ18}. The positive stability of
$(a_{ij})$ implies the parabolicity of \eqref{1.eq} in the sense of Petrovskii.
Indeed, $(\pi_i a_{ij})$ and $\operatorname{diag}(u_i/\pi_i)$
are symmetric, positive definite matrices for $u\in(0,\infty)^n$. 
Thus, its product $(u_ia_{ij})$ has only positive eigenvalues \cite[Theorem 7]{Bos87}
which proves the claim. The assumption that the diffusion coefficient $\delta$
is the same for all species is a simplification needed to conclude that
$h(u)$ is coercive, $h(u)\ge(\lambda/2)|u|^2$ for $u\in\R^n$. It can
be removed by exploiting the Shannon entropy to show first that $u_i$ is nonnegative, 
but this requires more technical effort which will be detailed in a future work.


\section{Proof of Theorem~\ref{thm.ex}}

We proceed by induction. 
For $k=0$, we have $u_i^0 \geq 0$ by assumption. Assume that there exists a solution
$u^{k-1}$ for some $k\in\{2,\ldots,N_T\}$ such that $u_i^{k-1} \geq 0$ in $\Omega$,
$i=1,\ldots,n$. The construction of a solution $u^k$ is split in several steps.

{\em Step 1: Definition of a linearized problem.} Let $R > 0$, we set
$$
  Z_R := \big\{w=(w_{1}, \ldots, w_{n}) \in (\mathcal{H}_\T)^n \ : \ \|w_{i}\|_{1,2,\T}< R 
	\quad\mbox{for }i=1,\ldots,n\big\},
$$
and let $\eps>0$ be given. We define the mapping $F_\eps:Z_R\to\R^{\theta n}$
by $F_\eps(w)=w^\eps$, with $\theta=\#\T$, where 
$w^\eps = (w_{1}^\eps,\ldots,w_{n}^\eps)$ is the solution to the linear problem
\begin{equation}\label{3.lin}
  \eps\left(\sum_{\sigma\in\mathcal{E}_K}\tau_\sigma D_{K,\sigma}(w_i^\eps) 
	+ \m(K) w^\eps_{i,K} \right)
  = -\bigg(\frac{\m(K)}{\Delta t}(u_{i,K}-u_{i,K}^{k-1})
	+ \sum_{\sigma\in\mathcal{E}_K}\F^+_{i,K,\sigma} \bigg),
\end{equation}
for $K\in\T$, $i=1,\ldots,n$, and $\F^+_{i,K,\sigma}$ is defined in \eqref{sch2}
with $u_{i,\sigma}$ replaced by $\bar u_{i,\sigma}=\min\{u_{i,K}^+,u_{i,K,\sigma}^+\}$,
where $z^+=\max\{0,z\}$. Here, $u_{i,K}$ is a function of $w_{1,K},\ldots,w_{n,K}$, 
defined by the entropy variables
\begin{equation}\label{3.w}
  w_{i,K} = \frac{\pi_i}{\delta} p_i(u_K)
	= \sum_{j=1}^n \frac{\pi_ia_{ij}}{\delta}u_j
	\quad\mbox{for all }K \in \T, \ i=1,\ldots,n.
\end{equation}
This is a linear system with the invertible coefficient matrix $(\pi_ia_{ij}/\delta)$,
and so, the function $u_K=u(w_K)$ is well-defined.
The existence of a unique solution $w_{i}^\eps$ to the linear scheme 
\eqref{3.lin}-\eqref{3.w} is now a consequence of \cite[Lemma 3.2]{EGH00}.

{\em Step 2: Continuity of $F_\eps$.} We fix $i\in\{1,\ldots,n\}$. 
Multiplying \eqref{3.lin} by $w_{i,K}^\eps$ and summing over $K\in\T$, we obtain, 
after discrete integration by parts,
$$
  \eps\|w^\eps_{i}\|^2_{1,2,\T}
	= -\sum_{K \in \T}\frac{\m(K)}{\Delta t}(u_{i,K}-u_{i,k}^{k-1}) w^{\eps}_{i,K} 
	+ \sum_{\substack{\sigma\in\mathcal{E}_{\mathrm{int}} \\ \sigma=K|L}}
	\F^+_{i,K,\sigma} D_{K,\sigma} w_{i}^\eps  
	=: J_1 + J_2.
$$
By the Cauchy-Schwarz inequality and the definition of $\F^+_{i,K,\sigma}$, 
we find that
\begin{align*}
  |J_1| &\le \frac{1}{\Delta t}\bigg(\sum_{K\in\T}\m(K)(u_{i,K}-u_{i,K}^{k-1})^2
	\bigg)^{1/2}\bigg(\sum_{K\in\T}\m(K)(w_{i,K}^\eps)^2\bigg)^{1/2} \\
	|J_2| &\le \bigg(\sum_{\sigma\in\mathcal{E}}\tau_\sigma
	\big(\delta D_\sigma u_i + \bar u_{i,\sigma}D_\sigma p_i(u) \big)^2\bigg)^{1/2}
	\bigg(\sum_{\sigma\in\mathcal{E}}\tau_\sigma(D_\sigma w_i^\eps)^2\bigg)^{1/2}.
\end{align*}
Hence, since $u_i$ is a linear combination of $(w_1,\ldots,w_n) \in Z_R$, there exists a constant $C(R)>0$ which is independent of $w^\eps$ such that
$|J_1|+|J_2|\le C(R)\|w_{i}^\eps\|_{1,2,\T}$. 
Inserting these estimations, it follows that $
  \eps\|w^{\eps}_{i}\|_{1,2,\T} \le C(R)$.

We turn to the proof of the continuity of $F_\eps$. Let $(w^m)_{m\in\N}\subset Z_R$ 
be such that $w^m \to w$ as $m\to\infty$. The previous estimate shows that $w^{\eps,m} := F_\eps(w^m)$ is bounded uniformly in $m\in\N$. Thus, there exists a 
subsequence of $(w^{\eps,m})$, which is not relabeled, such that 
$w^{\eps,m} \to w^\eps$ as $m\to\infty$. Passing to the limit $m\to\infty$ in 
scheme \eqref{3.lin}-\eqref{3.w} and taking into account the continuity of the 
nonlinear functions, we see that $w_{i}^\eps$ is a solution to \eqref{3.lin}-\eqref{3.w}
for $i=1,\ldots,n$ and $w^\eps = F_\eps(w)$. Because of the uniqueness of the limit 
function, the whole sequence converges, which proves the continuity.

{\em Step 3: Existence of a fixed point.} We claim that the map $F_\eps$ admits a 
fixed point. We use a topological degree argument \cite{Dei85}, i.e., we prove that 
$\operatorname{deg}(I-F_\eps,Z_R,0) = 1$, where
$\operatorname{deg}$ is the Brouwer topological degree. Since $\mathrm{deg}$ is invariant by homotopy, it is sufficient to prove that any 
solution $(w^\eps,\rho)\in \overline{Z}_R\times[0,1]$ to the fixed-point equation 
$w^\eps = \rho F_\eps(w^\eps)$ satisfies $(w^\eps,\rho)\not\in\pa Z_R\times[0,1]$ 
for sufficiently large values of $R>0$. Let $(w^\eps,\rho)$ be a fixed point and 
$\rho\neq 0$, the case $\rho=0$ being clear. Then $w_{i}^\eps/\rho$ solves
\begin{equation}\label{3.lin2}
  \eps\left(\sum_{\sigma\in \mathcal{E}_K}\tau_\sigma D_{K,\sigma}(w_i^\eps) 
	+\m(K) w^\eps_{i,K}\right)
  = -\rho\bigg(\frac{\m(K)}{\Delta t}(u^\eps_{i,K}-u_{i,K}^{k-1})
	+ \sum_{\sigma\in\mathcal{E}_K}\F^{+,\eps}_{i,K,\sigma}  \bigg),
\end{equation}
for all $K \in \T$, $i=1,\ldots,n$, and $\F_{i,K,\sigma}^{+,\eps}$ is defined as in 
\eqref{sch2} with $u$ replaced by $u^\eps$. The following discrete entropy 
production inequality is the key argument.

\begin{lemma}[Discrete entropy production inequality]\label{lem.edi1}
Let the assumptions of Theorem \ref{thm.ex} hold. Then, for any $\rho\in(0,1]$ and
$\eps>0$,
\begin{align}
   \rho \sum_{K \in \T} & \m(K) h(u^\eps_K) 
	+ \eps\Delta t\sum_{i=1}^n \|w^\eps_{i}\|^2_{1,2,\T}
	+ \rho \Delta t \lambda \sum_{i=1}^n \sum_{\sigma \in \mathcal{E}}
	\tau_\sigma(D_\sigma u_i^\eps)^2 \nonumber \\
	&{}+ \rho \frac{\Delta t}{\delta} \sum_{i=1}^n 
	\sum_{\sigma \in \mathcal{E}}\tau_\sigma 
	\pi_i \bar u^{\eps}_{i,\sigma} (D_\sigma p_i(u^\eps))^2  
	\le \rho \sum_{K \in \T} \m(K) h(u^{k-1}_K),\label{edi1.dis}
\end{align}
with $\lambda>0$ being the smallest eigenvalue of $(\pi_i a_{ij})$ and 
obvious notations for $\bar u_{i,\sigma}^{\eps}$.
\end{lemma}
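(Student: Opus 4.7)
The plan is to test the linearized scheme \eqref{3.lin2} against the entropy variables $w_{i,K}^\eps$ themselves. Specifically, I would multiply \eqref{3.lin2} by $\Delta t\, w_{i,K}^\eps$, sum over $K \in \T$ and $i=1,\ldots,n$, and then reorganize the four resulting contributions so that they match the four terms on the left-hand side of \eqref{edi1.dis}. The detailed-balance condition \eqref{1.db} is the backbone of the computation: it makes $(\pi_i a_{ij})$ symmetric, so that $w_i = \pa h/\pa u_i$ (and $w^\eps$ is the genuine entropy variable conjugate to $u$), and together with positive stability it ensures that $h$ is a convex quadratic form whose Hessian has smallest eigenvalue $\lambda > 0$.

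For the discrete time-derivative contribution $\rho \sum_i \sum_K \m(K)(u_{i,K}^\eps - u_{i,K}^{k-1}) w_{i,K}^\eps$, I would invoke the convexity inequality
\begin{equation*}
\sum_{i=1}^n w_{i,K}^\eps\bigl(u_{i,K}^\eps - u_{i,K}^{k-1}\bigr) \ge h(u_K^\eps) - h(u_K^{k-1}), \qquad K \in \T,
\end{equation*}
which is available precisely because $w^\eps_K = \na h(u^\eps_K)$. Summation over $K$ produces the telescoped difference $\rho \sum_K \m(K)(h(u_K^\eps) - h(u_K^{k-1}))$, which, once moved to the left-hand side, accounts for both the first term and the right-hand side of \eqref{edi1.dis}. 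The $\eps$-regularization term is handled exactly as in Step~2 of the induction and yields $\eps \Delta t \sum_i \|w_i^\eps\|^2_{1,2,\T}$ via discrete integration by parts.

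For the flux term, discrete integration by parts converts $\rho\Delta t \sum_i \sum_K \sum_{\sigma \in \mathcal{E}_K} \F^{+,\eps}_{i,K,\sigma} w_{i,K}^\eps$ into an edge sum, and substituting $D_{K,\sigma} w_i^\eps = (\pi_i/\delta) D_{K,\sigma} p_i(u^\eps)$ together with the definition of $\F^{+,\eps}$ produces two contributions. The first is the nonnegative dissipation $\rho(\Delta t/\delta) \sum_i \sum_\sigma \tau_\sigma \pi_i \bar u^\eps_{i,\sigma} (D_\sigma p_i(u^\eps))^2$, which is already the fourth term of \eqref{edi1.dis}. The second is a mixed quadratic form $\rho\Delta t \sum_\sigma \sum_{i,j} \pi_i a_{ij}\, D_{K,\sigma} u_i^\eps\, D_{K,\sigma} u_j^\eps$, which the positive-definite bound $(\pi_i a_{ij}) \ge \lambda I$ estimates from below by $\rho\Delta t\lambda \sum_i \sum_\sigma \tau_\sigma (D_\sigma u_i^\eps)^2$, producing the third term of \eqref{edi1.dis}.

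The main obstacle is the convexity step for the time derivative. Unlike the standard case of a separable entropy $h(u)=\sum_i h_i(u_i)$, here $h$ mixes species through the weighted cross-terms and is not the sum of single-species entropies, so the argument must rely on the \emph{global} convexity of the quadratic form and on the precise matching $w_i = \pa h/\pa u_i$ enforced by \eqref{1.db}. A secondary subtlety is that $u^\eps$ is not yet known to be nonnegative at this stage; this is irrelevant for the coercivity of the quadratic form, which holds on all of $\R^n$, and the nonnegativity of $\bar u^\eps_{i,\sigma}$ built into the flux guarantees that the dissipation term is genuinely a sum of nonnegative quantities regardless of the sign of $u^\eps$.
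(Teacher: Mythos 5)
Your proposal is correct and follows essentially the same route as the paper: multiply \eqref{3.lin2} by $\Delta t\,w_{i,K}^\eps$, sum over $K$ and $i$, use the convexity of $h$ together with $w_i=\pa h/\pa u_i$ (valid by the symmetry of $(\pi_i a_{ij})$ from \eqref{1.db}) for the time-difference term, and split the flux term after discrete integration by parts into the mixed quadratic form bounded below by $\lambda$ and the nonnegative dissipation term. Your remarks on the non-separable entropy and on the sign of $\bar u^\eps_{i,\sigma}$ match the paper's treatment.
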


\begin{proof}
We multiply \eqref{3.lin2} by $\Delta t w_{i,K}^\eps$ and sum over $i$
and $K\in\T$. This gives, after discrete integration by parts,
$\eps\Delta t\sum_{i=1}^n \|w_{i}^\eps\|^2_{1,2,\T}	+ J_3 + J_4 + J_5=0$, where 
\begin{align*}
	J_3 &= \rho\sum_{i=1}^n\sum_{K\in\T}\m(K)(u_{i,K}^\eps-u_{i,K}^{k-1})w_{i,K}^\eps, \\
	J_4 &= -\rho \Delta t \sum_{i=1}^n\sum_{\substack{\sigma\in\mathcal{E}_{\mathrm{int}} 
	\\ \sigma=K|L}} \tau_\sigma \delta D_{K,\sigma} u_i^\eps w_{i,K}^\eps, \\
	J_5 &= \rho \Delta t \sum_{i=1}^n\sum_{\substack{\sigma\in\mathcal{E}_{\mathrm{int}} 
	\\ \sigma=K|L}} \tau_\sigma\bar u^{\eps}_{i,\sigma} D_{K,\sigma} p_i(u^\eps) 
	D_{K,\sigma} w_{i,K}^\eps.
\end{align*}
For $J_3$, we use the convexity of $h$ for its estimation; 
for $J_4$, we take into account the symmetry of $\tau_\sigma$ with 
respect to $\sigma=K|L$, definition~\eqref{3.w} of $w^\eps_{i}$ and the positive 
definiteness of $(\pi_i a_{ij})$; and for $J_5$, we employ
definition~\eqref{3.w} of $w^\eps_{i}$:
\begin{align*}
  J_3 &\geq \rho \sum_{K \in \T} \m(K) \big(h(u_K^\eps)-h(u_K^{k-1})\big), \\
	J_4 &= \rho \Delta t \sum_{i,j=1}^n \sum_{\substack{\sigma\in\mathcal{E}_{\mathrm{int}}
	\\ \sigma=K|L}} \tau_\sigma \pi_i a_{ij} D_{K,\sigma} u^\eps_{i} 
	D_{K,\sigma} u^\eps_{j}
  \geq \rho \Delta t \lambda \sum_{i=1}^n \sum_{\sigma \in \mathcal{E}} 
	\tau_\sigma (D_{\sigma} u^\eps_{i})^2, \\
	 J_5 &= \rho\frac{\Delta t}{\delta}\sum_{i=1}^n \sum_{\sigma \in \mathcal{E}} 
	\tau_\sigma \pi_i\bar u^\eps_{i,\sigma}(D_\sigma p_i(u^\eps))^2.
\end{align*}
Putting all the estimations together completes the proof.\qed
\end{proof}

We proceed with the topological degree argument. Lemma \ref{lem.edi1} implies that
$$
  \eps\Delta t\sum_{i=1}^n\|w_{i}^\eps\|^2_{1,2,\T}
	\le \rho \sum_{K\in \T} \m(K) h(u^{k-1}_K) \le \sum_{K\in \T} \m(K) 
	h(u^{k-1}_K).
$$
Then, if we define $R:=(\eps\Delta t)^{-1/2}(\sum_{K\in\T}\m(K)h(u^{k-1}_K))^{1/2}+1$,
we conclude that $w^\eps \not\in\pa Z_R$ and $\operatorname{deg}(I-F_\eps,Z_R,0)=1$. 
Thus, $F_\eps$ admits a fixed point

{\em Step 4: Limit $\eps\to 0$.} 
Recall that $h(u_K)\ge (\lambda/2)|u_K|^2$ (note that $u_{i,K}\in\R$ at this point). 
Thus, by Lemma~\ref{lem.edi1}, there exists a constant $C>0$ 
depending only on the mesh but not on $\eps$ such that for all $K \in \T$ and 
$i=1,\ldots,n$,
$$
  |u_{i,K}^\eps| \le C(\lambda)\left(\sum_{K \in \T} \m(K) h(u^{k-1}_K) \right)^{1/2}.
$$
Thus, up to a subsequence, for $i=1,\ldots,n$ and for all $K \in \T$, we infer the 
existence of $u_{i,K}\in\R$ such that $u^\eps_{i,K} \to u_{i,K}$ as $\eps\to 0$. 
We deduce from \eqref{edi1.dis} that there exists a subsequence (not relabeled) such 
that $\eps w_{i,K}^\eps\to 0$ for any $K\in\T$ and $i=1,\ldots,n$. 
Hence, the limit $\eps\to 0$ in~\eqref{3.lin2} yields the existence of a solution to
\eqref{3.lin} with $\eps=0$.

Let $i\in\{1,\ldots,n\}$ and $K\in\T$ such that $u_{i,K}=\min_{L\in\T}u_{i,L}$.
We multiply \eqref{3.lin} with $\eps=0$ by $\Delta t u_{i,K}^-$ with 
$z^-=\min\{0,z\}$ and use the induction hypothesis:
\begin{align*}
  \m(K)(u_{i,K}^-)^2 &- \Delta t\sum_{\sigma\in\mathcal{E}_K}\tau_\sigma
	(\delta + a_{ii}\bar u_{i,\sigma})D_{K,\sigma}(u_i) u_{i,K}^-  \\
	&{}- \Delta t\sum_{j\neq i}\sum_{\sigma\in\mathcal{E}_K}\tau_\sigma a_{ij}
	\bar u_{i,\sigma}D_{K,\sigma}(u_j)u_{i,K}^- = 0.
\end{align*}
The second term is nonpositive since $\bar u_{i,\sigma}\ge 0$ and
$D_{K,\sigma}(u_i)\ge 0$, by the choice of $K$. The last term vanishes since
$\bar u_{i,\sigma}u_{i,K}^- = u_{i,K}^+u_{i,K}^-=0$, by the definition of
$\bar u_{i,\sigma}$.
This shows that $u_{i,L}\ge u_{i,K}\ge 0$ for all $L\in\T$ and $i=1,\ldots,n$.
Passing to the limit $\eps \to 0$ 
in~\eqref{edi1.dis} yields inequality~\eqref{4.edi1}, 
which completes the proof of Theorem \ref{thm.ex}.


\section{Convergence analysis and perspectives}

In this section, we sketch the proof of the convergence of the scheme and possible 
extensions of the method presented in this paper.

\begin{itemize}
\item[$\bullet$] Let us give the main features of the proof of convergence. First, 
thanks to the a priori estimates given by~\eqref{4.edi1} and 
assumption~\eqref{2.regmesh}, we prove the existence of a constant $C>0$ independent 
of $\Delta x$ and $\Delta t$ such that for all $i=1,\ldots,n$ and 
$\phi \in C^\infty_0(\Omega \times (0,T))$,
\begin{equation}\label{transl.time}
  \sum_{k=1}^{N_T} \sum_{K \in \T} \m(K) (u^k_{i,K}-u^{k-1}_{i,K})\phi(x_K,t_k) 
	\leq C \|\nabla \phi\|_{L^\infty(\Omega \times (0,T))}.
\end{equation}
Next, we consider a sequence of admissible meshes $(\T_\eta,\Delta t_\eta)_{\eta >0}$ 
of $\Omega \times (0,T)$, indexed by the size $\eta = \{\Delta x, \Delta t \}$, 
satisfying~\eqref{2.regmesh} uniformly in $\eta$. For any $\eta > 0$, we denote by 
$u_\eta=(u_{1,\eta},\ldots,u_{n,\eta})$ the piecewise constant (in time and space) 
finite-volume solution constructed in Theorem~\ref{thm.ex}. We deduce, 
thanks to~\cite[Theorem 3.9]{ACM17} and~\eqref{transl.time}, that there exist 
nonnegative functions $u_1, \ldots, u_n$ such that, up to a subsequence, 
$$
  u_{i,\eta} \to u_i \quad \mbox{a.e. in }\Omega \times (0,T) \mbox{ as }\eta \to 0,
	\quad i=1,\ldots,n.
$$
Moreover, we conclude from~\eqref{4.edi1} that $u_{i,\eta} \in L^\infty(0,T;L^2(\Omega))
\subset L^2(\Omega\times (0,T))$ uniformly in $\eta$ for $i=1,\ldots,n$. Hence, 
$(u_{i,\eta})$ is equi-integrable in $L^2(\Omega \times (0,T))$. Thus, applying 
the Vitali convergence theorem, we deduce that, up to a subsequence,
$u_{i,\eta} \to u_i$ strongly in $L^2(\Omega \times (0,T))$ as $\eta \to 0$, 
$i=1,\ldots,n$. 
The discrete entropy production inequality yields a uniform bound of the discrete 
gradient $\na^\eta$ of $u_{i,\eta}$ in $L^2(\Omega \times (0,T))$; see~\cite{CLP03} 
for a definition of $\na^\eta$. It follows from \cite[Lemma 4.4]{CLP03} that, up to 
a subsequence,
$$
  \na^\eta u_{i,\eta} \rightharpoonup \na u_i \quad \mbox{weakly in }
	L^2(\Omega \times (0,T))\mbox{ as }\eta \to 0, \ i=1,\ldots,n.
$$ 
Finally, following the method developed in~\cite{CLP03}, we prove that the limit 
function $u=(u_1,\ldots,u_n)$ is a weak solution to~\eqref{1.eq}-\eqref{1.bic}.

\item[$\bullet$] 
We already mentioned that system \eqref{1.eq} can be interpreted as a simplification
of the SKT model. In a future work, we will analyze a structure-preserving
finite-volume approximation of the full SKT model. 
Such a discretization was analyzed in \cite{ABB11}, but only for positive
definite diffusion matrices associated to \eqref{1.eq}. We will extend the
analysis of \cite{ABB11} without this assumption.
\end{itemize}

\begin{acknowledgement}
The authors acknowledge partial support from the Austrian Science Fund (FWF), 
grants P30000, P33010, W1245, and F65.
\end{acknowledgement}

\end{document}